\newtheorem{teo}{Theorem}
\newtheorem{lema}[teo]{Lemma}
\newcommand{\sinc}{\operatorname{sinc}}
\newcommand{\supp}{\operatorname{supp}}
\DeclareMathOperator*{\esup}{ess\,sup}
\DeclareMathOperator*{\einf}{ess\,inf}
\newcommand{\espan}{\operatorname{span}}
\definecolor{gris}{gray}{.93}
\newenvironment{cgris*}[2]{\vspace{0.3cm} \noindent
\fcolorbox{gris}{gris}{\hspace{0.04cm} \parbox{.97 \textwidth}
{\vspace{0.3cm}{#2} \vspace{0.3cm}} \hspace{0.04cm}}
\vspace{0.3cm}}  {}
\title{{\bf Sampling formulas involving differences in shift-invariant subspaces: a unified approach}}
\author{
{\bf Antonio~G. Garc\'{\i}a}\thanks{E-mail:\texttt{agarcia@math.uc3m.es}} \,\, 
{\bf and \, Mar\'{\i}a~J. Mu\~noz-Bouzo}\thanks{E-mail:\texttt{mjmunoz@mat.uned.es}}. \,\,}
\date{}
\begin{document}
\maketitle
\begin{itemize}
\item[*] Departamento de Matem\'aticas, Universidad Carlos III de Madrid,
 Avda. de la Universidad 30, 28911 Legan\'es-Madrid, Spain.
\item[\dag] Departamento de Matem\'aticas Fundamentales, U.N.E.D., Senda del Rey 9, 28040 Madrid, Spain.
\end{itemize}
\begin{abstract}
Successive differences on a sequence of data help to discover some smoothness features of this data. This was one of the main reasons for rewriting the classical interpolation formula in terms of such data differences. The aim of this paper is to mimic them to a sequence of regular samples of a function in a shift-invariant subspace allowing its stable recovery. A suitable expression for the functions in the shift-invariant subspace by means of an isomorphism with the $L^2(0,1)$ space is the key to identify the simple pattern followed by the  dual Riesz bases involved in the derived formulas. The paper contains examples illustrating different non-exhaustive situations including also the two-dimensional case.
\end{abstract}
{\bf Keywords}: Forward and backward differences; Averages; Shift-invariant subspaces; Dual Riesz bases; Sampling formulas.

\noindent{\bf AMS}: 42C15; 94A20; 97N50.
\section{Statement of the problem}
\label{section1}
Consider the shift-invariant space $V^2_\varphi=\big\{ \sum_{n\in \mathbb{Z}} a_n\, \varphi(t-n)\,:\, \{a_n\}\in \ell^2(\mathbb{Z})\big\}$ of $L^2(\mathbb{R})$ generated by $\varphi \in L^2(\mathbb{R})$. The goal of this work is to investigate the existence of  stable sampling formulas for any $f\in V^2_\varphi$ from differences (or averages) obtained from a  regular sequence of samples $\{f(a+n)\}_{n\in \mathbb{Z}}$. That is, the involved samples are $p$ subsequences taken from 
\begin{equation}
\label{diferencias}
\big\{f(a+pn)\big\}_{n\in \mathbb{Z}}\,; \quad \big\{\Delta^k_+ f(a+pn)\big\}_{n\in \mathbb{Z}}\,;\quad \mbox{and/or}\quad \big\{\Delta^k_- f(a+pn)\big\}_{n\in \mathbb{Z}}\,, 
\end{equation}
where $\Delta^k_+ f(a+pn):=\Delta^{k-1}_+ f(a+pn+1)-\Delta^{k-1}_+ f(a+pn)$ denotes the $k$-th  forward difference operator, $\Delta^k_- f(a+pn):=\Delta^{k-1}_- f(a+pn)-\Delta^{k-1}_- f(a+pn-1)$ denotes the $k$-th  backward difference operator, $1\le k\le p-1$, and $a$ is a fixed number in $[0,1)$; here $\Delta_\pm^0=I$. Successive differences on samples $\big\{f(a+n)\big\}_{n\in \mathbb{Z}}$  can help us to discover some smoothness features on the original function $f$.

It is a well-known that, under some appropriate hypotheses (see Section \ref{subsection2-2} below),  a {\em Shannon-type sampling formula} as
\begin{equation}
\label{sf1}
f(t)=\sum_{n\in \mathbb{Z}} f(a+n)\, S_a(t-n)\,, \quad t\in \mathbb{R}\,,
\end{equation} 
holds in $V^2_\varphi$ for some sampling function $S_a\in V^2_\varphi$ such that the sequence $\big\{S_a(t-n)\big\}_{n\in \mathbb{Z}}$ is a Riesz basis for $V^2_\varphi$. As a consequence, the above sampling formula is an interpolation formula: Indeed,  for each $n\in \mathbb{Z}$, the function $S_a$ satisfies the interpolation property $S_a(a+n)=\delta_{n}$. 

The sampling formula \eqref{sf1} was first proved by Walter in \cite{walter:92}; since then, sampling theory in shift-invariant subspaces has been largely studied; see, for instance, Refs. \cite{aldroubi:01,aldroubi:05,chen:02,garcia:06,kang:11,kwon:08,sun:02,sun:99}. For the general theory of shift-invariant subspaces and their applications, see, for instance, Refs. \cite{aldroubi:94,aldroubi:92,boor1:94,boor2:94,lei:97,unser:00}.

\medskip

Roughly speaking, the aim of the paper is to rewrite formula \eqref{sf1} by replacing some samples by any other information given by differences, averages, etc.  Let us consider an easy example. Initially, we  write formula \eqref{sf1} as
\[
f(t)=\sum_{n\in \mathbb{Z}}\big\{ f(a+2n)\, S_a(t-2n)+f(a+2n+1)\, S_a(t-2n-1)\big\}\,, \quad t\in \mathbb{R}\,.
\]
Then, adding and subtracting $f(a+2n)\, S_a(t-2n-1)$ in each summand yields
\[
f(t)=\sum_{n\in \mathbb{Z}} \big\{f(a+2n)\,\big[S_a(t-2n)+S_a(t-2n-1)\big]+\Delta_+ f(a+2n)\,S_a(t-2n-1)\big\}\,, \quad t\in \mathbb{R}\,.
\]
The stable recovery of any $f\in V^2_\varphi$ from the samples $\big\{f(a+2n)\big\}_{n\in \mathbb{Z}}\cup \big\{\Delta_+ f(a+2n)\big\}_{n\in \mathbb{Z}}$ follows from the fact that the sequence 
\[
\big\{S_a(t-2n)+S_a(t-2n-1)\big\}_{n\in \mathbb{Z}}\cup \big\{S_a(t-2n-1) \big\}_{n\in \mathbb{Z}}
\]
turns out to be a Riesz basis for $V^2_\varphi$, or equivalently, there exist constants $0<A\le B$ such that 
\[
A\|f\|^2 \le \sum_{n\in \mathbb{Z}}\big\{|f(a+2n)|^2+|\Delta_+ f(a+2n)|^2 \big\} \le B\|f\|^2\quad \text{for each $f\in V^2_\varphi$}\,.
\]

\medskip

In this paper we derive, in a systematic way, stable sampling formulas involving forward and/or backward  differences of arbitrary order or averages of the sequence of samples $\{f(a+n)\}_{n\in \mathbb{Z}}$. The used technique will allow to deal also with central differences and averages; this is done in Section \ref{section3}. Thus we obtain a sort of counterpart to the family of finite interpolation formulas in the infinite setting of sampling in shift-invariant subspaces of $L^2(\mathbb{R})$.
The existence of an isomorphism $\mathcal{T}_\varphi$ between $L^2(0,1)$ and $V^2_\varphi$ and an associated representation of the functions in $V^2_\varphi$ (see \eqref{expression1} below) make the mathematical technique very simple: we identify the available samples as inner products with respect to a suitable Riesz basis in $L^2(0,1)$ for which we obtain its corresponding dual Riesz basis; then we derive the corresponding sampling formula in $V^2_\varphi$ by using $\mathcal{T}_\varphi$. Finally, the two-dimensional case is also treated in Section \ref{section4}.

\section{Sampling theorem in a shift-invariant subspaces of $L^2(\mathbb{R})$}
\label{section2}
For the sake of completeness we include the  preliminaries on sampling in shift-invariant subspaces needed in the sequel.
\subsection{Some preliminaries on shift-invariant subspaces}
\label{subsection2-1}
Assume that for $\varphi \in L^2(\mathbb{R})$ the sequence $\{\varphi(t-n)\}_{n\in \mathbb{Z}}$ is a {\em Riesz sequence} for $L^2(\mathbb{R})$, i.e., a Riesz basis for its closed span. A {\em Riesz basis} for a (separable) Hilbert space is the image of an orthonormal basis by means of an invertible bounded operator.
Thus, the (principal) shift-invariant subspace $V^2_\varphi:=\overline{\espan}\{\varphi(t-n)\}_{n\in \mathbb{Z}}$ can be described as 
\[
V^2_\varphi=\big\{ \sum_{n\in \mathbb{Z}} a_n\, \varphi(t-n)\,:\, \{a_n\}\in \ell^2(\mathbb{Z})\big\}\,.
\] 
We also assume that the generator $\varphi$ is continuous on $\mathbb{R}$ and the $1$-periodic function $\sum_{n\in \mathbb{Z}} |\varphi(t-n)|^2$ is bounded on $[0,1]$; this is equivalently to assume that $V_\varphi^2   $ is a shift-invariant subspace of continuous functions on $\mathbb{R}$ \cite{sun:99}. As a consequence, the functions in $V^2_\varphi$ are continuous on $\mathbb{R}$ and any $f\in V^2_\varphi$ is described on $\mathbb{R}$ as the pointwise sum $f(t)=\sum_{n\in \mathbb{Z}} a_n\, \varphi(t-n)$, \,$t\in \mathbb{R}$.

On the other hand, the shift-invariant space $V^2_\varphi$ is the image of $L^2(0,1)$ by means of the isomorphism: 
\[
    \begin{array}{rccl}
	\mathcal{T}_\varphi:& L^2(0,1) &\longrightarrow & V_\varphi^2\\
	  & \{{\rm e}^{-2\pi i nx}\}_{n\in \mathbb{Z}} &\longmapsto & \{\varphi(t-n)\}_{n\in \mathbb{Z}}\,,
    \end{array}
\]
which maps the orthonormal basis  $\{{\rm e}^{-2\pi inw}\}_{n\in \mathbb{Z}}$ for $L^2(0,1)$ onto
the Riesz basis $\{\varphi(t-n)\}_{n\in\mathbb{Z}}$ for $V_\varphi^2$. This isomorphism $\mathcal{T}_\varphi$ satisfies, for each $F\in L^2(0,1)$ and $n\in \mathbb{Z}$, the following {\em shifting property}
\begin{equation}
\label{shifting}
\mathcal{T}_\varphi \big[F(x)\,e^{-2\pi i n x}\big](t)=\mathcal{T}_\varphi[F](t-n)\,, \quad t\in \mathbb{R}\,.
\end{equation}
After some straightforward calculations (see \cite{garcia:05,garcia:14} for the details), any $f\in V^2_\varphi$ can be expressed as
\begin{equation}
\label{expression1}
f(t)=\big\langle F, K_t \big\rangle_{L^2(0,1)}\,, \quad t\in \mathbb{R}\,,
\end{equation}
where $f=\mathcal{T}_\varphi F$ and $K_t(x)=\sum_{n\in \mathbb{Z}} \overline{\varphi(t-n)}\, e^{-2\pi i nx} \in L^2(0,1)$. Notice that $K_t(x)=\overline{Z\varphi(t,x)}$, where 
$Z\varphi(t,x):=\sum_{n\in \mathbb{Z}}\varphi(t+n){\rm e}^{-2\pi i nx}$ is just the {\em Zak transform} of the function $\varphi$ (see \cite{ole:03} for properties and uses of the Zak transform). For $t\in \mathbb{R}$ and $m\in \mathbb{Z}$, the following property holds: $K_{t+m}(x)=e^{-2\pi i mx}K_t(x)$.

From expression \eqref{expression1} we easily deduce that evaluation functionals are bounded on $V^2_\varphi$, i.e.,  it is a {\em reproducing kernel Hilbert space} (RKHS). Thus, convergence in the $L^2(\mathbb{R})$-norm implies pointwise convergence which is uniform on $\mathbb{R}$ since the function $t \mapsto \|K_t\|^2=\sum_{n\in \mathbb{Z}} |\varphi(t-n)|^2$ is assumed to be bounded on $\mathbb{R}$. As a consequence, from now on we are only concerned with $L^2(\mathbb{R})$-norm convergence. See more details in \cite{garcia:14,garcia:15}.

\subsection{Shannon-type sampling formula in $V^2_\varphi$}
\label{subsection2-2}
Having in mind expression \eqref{expression1}, for $a\in [0,1)$ fixed and $n\in \mathbb{Z}$ we have
\[
f(a+n)=\langle F, K_{a+n}\rangle_{L^2(0,1)} = \langle F, {\rm e}^{-2\pi i n x}K_a \rangle_{L^2(0,1)}\,, \quad  F=\mathcal{T}_\varphi^{-1} f\,.
\]
Thus, the sequence of samples $\{f(a+n)\}_{n\in \mathbb{Z}}$ forms a {\em stable sampling set} for any $f\in V^2_\varphi$ if and only if the sequence 
$\big\{{\rm e}^{-2\pi i n x}K_a(x)\big\}_{n\in \mathbb{Z}}$ is a Riesz basis for $L^2(0,1)$. This occurs if and only if the inequalities $0<\|K_a\|_0\le \|K_a\|_\infty < \infty$ hold, where $\|K_a\|_0:=\einf_{x\in(0,1)}|K_a(x)|$ and  $\|K_a\|_\infty:=\esup_{x\in(0,1)}|K_a(x)|$.  Moreover, its dual Riesz basis is  $\big\{{\rm e}^{-2\pi i n x}/\overline{K_a(x)}\big\}_{n\in \mathbb{Z}}$ (see \cite{garcia:05,garcia:14} for the details). Therefore, for each $F\in L^2(0,1)$ we have the expansion
\[
F=\sum_{n\in \mathbb{Z}} \langle F, {\rm e}^{-2\pi i n x}K_a \rangle \, \frac{{\rm e}^{-2\pi i n x}}{\overline{K_a(x)}}=\sum_{n\in \mathbb{Z}} f(a+n)\, \frac{{\rm e}^{-2\pi i n x}}{\overline{K_a(x)}}\quad \text{ in $L^2(0,1)$}\,.
\]
Applying the isomorphism $\mathcal{T}_\varphi$ and its shifting property we obtain in $V^2_\varphi$ the Shannon-type sampling formula
\[
f(t)=\sum_{n\in \mathbb{Z}} f(a+n)\,S_a(t-n)\,, \quad t\in \mathbb{R}\,,
\]
where $S_a:=\mathcal{T}_\varphi\big(1/\overline{K_a}\big)\in V_\varphi^2$. The  convergence of the series is absolute due to the unconditional convergence of a Riesz basis expansion and it is also uniform on $\mathbb{R}$ since the shift-invariant space $V_\varphi^2$ is a RKHS.
\subsubsection*{Some well-known examples}
A famous example of shift-invariant subspace in  $L^2(\mathbb{R})$ is the Paley-Wiener space of band-limited functions to an interval, say $[-\pi, \pi]$, i.e., 
\[
PW_\pi:=\big\{f\in L^2(\mathbb{R})\,:\, \supp \widehat{f}\subseteq [-\pi,\pi] \big\}\,.
\] 
It coincides with the shift-invariant subspace $V^2_{\text{sinc}}$ whose generator $\varphi$ is the {\em sinc function}, i.e.,  $\sinc t:=\sin \pi t/\pi t$. In $PW_\pi$, for any $a\in [0,1)$, the sampling formula
\[
f(t)=\sum_{n\in \mathbb{Z}} f(n+a)\, \frac{\sin \pi(t-n-a)}{\pi(t-n-a)}\,, \quad t\in \mathbb{R}\,,
\]
holds. In particular, taking $a=0$ we recover the famous Shannon's sampling formula. 

\medskip

Although Shannon's sampling theory has had an enormous impact, it has a number of problems as pointed out in \cite{unser:00}. 
From a practical point of view, the most important examples of shift-invariant spaces $V_\varphi^2$ are those generated by {\em $B$-splines}. Consider 
$\varphi :=N_m$ where $N_m$ is the $B$-spline of order $m-1$, i.e., $N_m:=N_1*N_1*\cdots*N_1$  ($m$ times) 
where $N_1:=\chi_{[0,1]}$ denotes the characteristic function of the interval $[0,1]$. It is known that the sequence $\big\{N_m(t-n)\big\}_{n\in \mathbb{Z}}$ is a Riesz basis for $V_{N_m}^2$ (see, for instance, \cite{ole:03}). For the quadratic and cubic splines the following sampling formulas hold:

\medskip

\noindent (1) {\em Quadratic Spline $N_3$}.  Here, taking $a=1/2$, for any $f \in V_{N_3}^2$ we obtain the sampling formula
\[
f(t)=\sum_{n\in \mathbb{Z}} f(n+\frac{1}{2})\, S_{1/2}(t-n)\,,\quad t\in \mathbb{R}\,,
\]
where  $S_{1/2}(t)=\sqrt{2}\sum_{n\in \mathbb{Z}} (2\sqrt{2}-3)^{|n+1|}\  N_3(t-n)$ (see Fig.\ref{figure1}). For $a=0$ the sampling result fails since $\|K_0\|_0=0$; see \cite{garcia:14} for the details.

\medskip

\noindent (2) {\em Cubic Spline $N_4$}. Here, taking $a=0$, for any $f\in V_{N_4}^2$ we obtain the sampling formula
\[
f(t)=\sum_{n\in \mathbb{Z}} f(n)\, S_0(t-n)\,, \quad t\in \mathbb{R}\,,
\]
where $S_0(t)=\sqrt{3}\sum_{n\in \mathbb{Z}} (-1)^n(2-\sqrt{3})^{|n|}\ N_4(t-n+2)$ (see Fig.\ref{figure1}). See \cite{garcia:14} for the details. In Ref. \cite{schoenberg:93} one can find more information on cardinal spline interpolation.

\vspace{0.5cm}
\begin{figure}
\centering
\includegraphics[width=6cm]{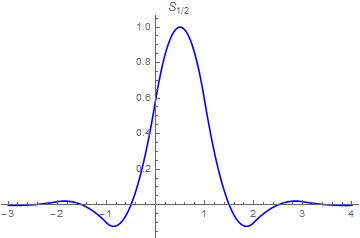}\qquad \includegraphics[width=6cm]{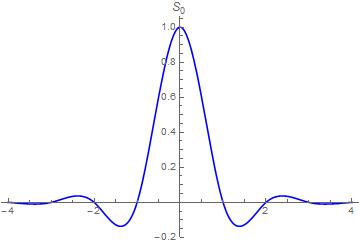}
\caption{Quadratic $S_{1/2}$ and cubic $S_0$ sampling funtions}
\label{figure1}
\end{figure}

\subsection{The involved Riesz bases in $L^2(0,1)$ and their duals}
\label{subsection2-3}
Having in mind expression \eqref{expression1}, for the {\em forward difference operator $\Delta_+$}  of $f\in V_\varphi^2$, i.e., $\Delta_+ f(t):=f(t+1)-f(t)$, $t\in \mathbb{R}$, we have
\[
\Delta_+ f(t)=\big\langle F, K_{t+1}-K_t \big\rangle_{L^2(0,1)}= \big\langle F, (e^{-2\pi i x}-1)\,K_t \big\rangle_{L^2(0,1)}\,, \quad t\in \mathbb{R}\,.
\]
In general, for the $k$-th forward difference operator we have the expression
\begin{equation}
\label{d+}
\Delta^k_+ f(t)=\big\langle F, (e^{-2\pi i x}-1)^k\,K_t \big\rangle_{L^2(0,1)}\,, \quad t\in \mathbb{R}\,.
\end{equation}
For the {\em backward difference operator $\Delta_-$} of $f\in V_\varphi^2$, i.e., $\Delta_- f(t):=f(t)-f(t-1)$, $t\in \mathbb{R}$, we have
\[
\Delta_- f(t)=\big\langle F, K_{t}-K_{t-1} \big\rangle_{L^2(0,1)}= \big\langle F, (1-e^{2\pi i x})\,K_t \big\rangle_{L^2(0,1)}\,, \quad t\in \mathbb{R}\,.
\]
In general, for the $k$-th backward difference operator we have the expression
\begin{equation}
\label{d-}
\Delta_-^k f(t)=\big\langle F, (1-e^{2\pi i x})^k\,K_t \big\rangle_{L^2(0,1)}\,, \quad t\in \mathbb{R}\,.
\end{equation}
Similar expressions can be also obtained from iterates of the {\em central difference operator} $\Delta_0$, i.e., $ \Delta_0 f(t):=f(t+1)-f(t-1)$, or from the {\em average operators} $\mu_+$, $\mu_-$ or $\mu_0$ defined as:
\[
\mu_+ f(t):=\frac{1}{2}[f(t+1)+f(t)]\,, \,\, \mu_- f(t):=\frac{1}{2}[f(t)+f(t-1)]\,, \,\,\mu_0 f(t):=\frac{1}{2}[f(t+1)+f(t-1)]\,.
\]
All these cases can be englobed by considering {\em generalized finite differences} defined as $\Delta_{m,n}^\mathbf{a} f(t):=\sum_{k=M}^N a_kf(t+k)$, where $\mathbf{a}=(a_k)\subset \mathbb{C}$ and $M,N\in \mathbb{Z}$.

\medskip

Going back to our previous example in Section \ref{section1}, we have for the samples $f(a+2n)$ and $\Delta_+ f(a+2n)$, $n\in \mathbb{Z}$:
\[
\begin{split}
f(a+2n)&=\big\langle F, e^{-2\pi i (2n) x} K_a \big\rangle_{L^2(0,1)}\,, \\ 
\Delta_+ f(a+2n)&=\big\langle F, e^{-2\pi i (2n+1) x} K_a-e^{-2\pi i (2n) x} K_a\big\rangle_{L^2(0,1)}\,.
\end{split}
\]
In other words, denoting $x_n:=e^{-2\pi i n x} K_a(x)$, $n\in \mathbb{Z}$, the questions are whether the sequence $\{x_{2n}\}_{n\in \mathbb{Z}} \cup \{x_{2n+1}-x_{2n}\}_{n\in \mathbb{Z}}$ is a Riesz basis for $L^2(0,1)$ and how to obtain, in the affirmative case, its dual basis. As it was mentioned in the introduction section, this is true: Indeed, denoting 
$y_n:=e^{-2\pi i n x}/\overline{K_a(x)}$, $n\in \mathbb{Z}$, the dual Riesz basis of $\{x_n\}_{n\in \mathbb{Z}}$, the dual Riesz basis of $\{x_{2n}\}_{n\in \mathbb{Z}} \cup \{x_{2n+1}-x_{2n}\}_{n\in \mathbb{Z}}$ is $\{y_{2n}+y_{2n+1}\}_{n\in \mathbb{Z}} \cup \{y_{2n+1}\}_{n\in \mathbb{Z}}$.

\medskip 

A general answer which involves the aforementioned generalized  finite differences is given in the following result:
\begin{lema}
\label{partition}
Let $\{x_n\}_{n\in \mathbb{Z}}$ be a Riesz basis for a separable Hilbert space $\mathcal{H}$ with dual Riesz basis $\{y_n\}_{n\in \mathbb{Z}}$. For a fixed $p\in \mathbb{N}$, consider a partition 
$\{x_{1n}\}_{n\in \mathbb{Z}}\cup \{x_{2n}\}_{n\in \mathbb{Z}} \cup \dots \cup \{x_{pn}\}_{n\in \mathbb{Z}}$ of $\{x_n\}_{n\in \mathbb{Z}}$ and let $M=\big( a_{jk}\big) \in \mathbb{C}^{p\times p}$ be a matrix of scalars. Then, the new sequence defined by using the rows of $M$ as
\begin{equation}
\label{lcriesz}
\big\{\sum_{k=1}^pa_{1k}x_{kn}\big\}_{n\in \mathbb{Z}}\cup \big\{\sum_{k=1}^pa_{2k}x_{kn}\big\}_{n\in \mathbb{Z}} \cup \dots 
\cup \big\{\sum_{k=1}^p a_{pk}x_{kn}\big\}_{n\in \mathbb{Z}}
\end{equation}
is a Riesz basis for $\mathcal{H}$ if and only if $\det M \neq 0$. In this case, its dual Riesz basis is given by 
\[
\big\{\sum_{k=1}^p \overline{b}_{k1}y_{kn}\big\}_{n\in \mathbb{Z}}\cup \big\{\sum_{k=1}^p \overline{b}_{k2}y_{kn}\big\}_{n\in \mathbb{Z}} \cup \dots 
 \cup \big\{\sum_{k=1}^p \overline{b}_{kp}y_{kn}\big\}_{n\in \mathbb{Z}}
\]
where $\big(b_{1k} \,\, b_{2k}\,\,\dots \,b_{pk}\big)^\top$ denotes the $k$-th column of the inverse matrix $M^{-1}$, $k=1, 2, \dots , p$. Moreover, if a sequence as \eqref{lcriesz} with $\det M\neq 0$ is a Riesz basis for $\mathcal{H}$ then the sequence $\{x_n\}_{n\in \mathbb{Z}}$ is also a Riesz basis for $\mathcal{H}$.
\end{lema}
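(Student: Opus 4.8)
The plan is to reduce the entire statement to a single block-diagonal matrix operator and then invoke the paper's own definition of a Riesz basis as the image of an orthonormal basis under a bounded invertible operator. First I would reindex the data along the partition: fix an orthonormal basis $\{e_{kn}\}_{k=1,\dots,p;\,n\in\mathbb{Z}}$ of $\mathcal{H}$, indexed exactly as the partition of $\{x_n\}$, and, since $\{x_n\}=\{x_{kn}\}$ is a Riesz basis, write $x_{kn}=V e_{kn}$ for a bounded invertible $V\colon\mathcal{H}\to\mathcal{H}$. The dual Riesz basis is then $y_{kn}=(V^{*})^{-1}e_{kn}$, which I would record at the outset since it is needed for the dual formula.

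The key observation is that the construction \eqref{lcriesz} factors through $V$. Setting $w_{jn}:=\sum_{k=1}^{p}a_{jk}e_{kn}$ one has $\sum_{k}a_{jk}x_{kn}=V w_{jn}$, so the new sequence is $\{V w_{jn}\}$. Because $V$ is bounded and invertible, it is immediate from the definition of a Riesz basis that $\{V w_{jn}\}$ is a Riesz basis if and only if $\{w_{jn}\}$ is. Now $\{w_{jn}\}$ is the image of the orthonormal basis $\{e_{kn}\}$ under the operator $W$ determined by $W e_{jn}=w_{jn}$; this $W$ leaves each finite-dimensional fiber $\espan\{e_{1n},\dots,e_{pn}\}$ invariant and acts there by the fixed matrix $M^{\top}$. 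Hence $W$ is bounded with $\|W\|=\|M\|$, and it is invertible precisely when $M^{\top}$ — equivalently $M$ — is invertible, i.e. when $\det M\neq0$; in the degenerate case a nonzero kernel vector of $M^{\top}$ placed in a single fiber exhibits non-injectivity of $W$. This yields the first claim: the sequence \eqref{lcriesz} is a Riesz basis if and only if $\det M\neq0$.

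For the dual basis I would use that \eqref{lcriesz} equals $\{(VW)e_{jn}\}$, so its dual Riesz basis is $\{((VW)^{*})^{-1}e_{jn}\}=\{(V^{*})^{-1}(W^{*})^{-1}e_{jn}\}$. Since $W$ acts fiberwise as $M^{\top}$, the operator $(W^{*})^{-1}$ acts fiberwise as $\overline{M^{-1}}$; writing $(M^{-1})_{kj}=b_{kj}$ (so that $(b_{1k},\dots,b_{pk})^{\top}$ is the $k$-th column of $M^{-1}$) gives $(W^{*})^{-1}e_{jn}=\sum_{k=1}^{p}\overline{b}_{kj}e_{kn}$. Applying $(V^{*})^{-1}$ and recalling $y_{kn}=(V^{*})^{-1}e_{kn}$ produces the dual vectors $\sum_{k=1}^{p}\overline{b}_{kj}y_{kn}$, exactly the stated formula. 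I expect this bookkeeping — keeping straight the transpose coming from \eqref{lcriesz} and the conjugate coming from the adjoint — to be the only delicate point; conceptually nothing beyond the identity $(VW)^{*}=W^{*}V^{*}$ is involved.

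Finally, the ``moreover'' converse follows by symmetry and does not need the standing hypothesis that $\{x_n\}$ is a Riesz basis. When $\det M\neq0$ the relations \eqref{lcriesz} invert fiberwise, $x_{kn}=\sum_{j=1}^{p}b_{kj}\big(\sum_{l}a_{jl}x_{ln}\big)$, so $\{x_{kn}\}$ arises from the combined sequence by the same block-linear-combination construction, now with the matrix $M^{-1}$, whose determinant $1/\det M$ is again nonzero. Thus, assuming \eqref{lcriesz} is a Riesz basis, the first part applied verbatim with $\{x\}$ and the combined sequence interchanged (and $M$ replaced by $M^{-1}$) shows that $\{x_{kn}\}$ is a Riesz basis as well.
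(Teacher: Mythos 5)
Your proof is correct, and it takes a genuinely different route from the paper's. The paper argues by hand: it disposes of the case $\det M=0$ via linear dependence within a single block, then, for $\det M\neq 0$, checks completeness through equality of spans, verifies the Riesz-sequence inequalities directly from the Riesz bounds of $\{x_n\}$ (producing explicit constants $AC$ and $BD$ built from the rows of $M$), and finally confirms duality by an explicit biorthogonality computation $\sum_{k,k'}a_{jk}b_{k'j'}\langle x_{kn},y_{k'm}\rangle=\delta_{jj'}\delta_{nm}$. You instead factor everything through the operator picture: writing $x_{kn}=Ve_{kn}$ and observing that the new system is $\{VWe_{jn}\}$ with $W$ block-diagonal with constant block $M^\top$ reduces the whole lemma to the invertibility of $W$, and the dual-basis formula drops out of $((VW)^*)^{-1}=(V^*)^{-1}(W^*)^{-1}$ acting fiberwise by $\overline{M^{-1}}$ --- your bookkeeping of the transpose and the conjugation is right, and it reproduces the stated columns of $M^{-1}$. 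What your approach buys: it is cleaner, it avoids the delicate coefficient-counting in the paper's norm estimates (where distinct $z$'s sharing the same block index $n$ cause the coefficients of the $x$'s to combine, so the paper's claimed identity $\sum_m|c_m|^2\sum_k|a_{j_mk}|^2$ for the squared coefficient norm is not literally correct as written), and it explicitly proves the final ``moreover'' clause by swapping roles and using $M^{-1}$, which the paper's proof does not address at all. What the paper's approach buys is explicit quantitative Riesz bounds for the new basis in terms of those of $\{x_n\}$. The only point you leave implicit is that a bounded operator carrying an orthonormal basis to a Riesz basis must be invertible; but since you exhibit a finite nontrivial linear dependence whenever $\det M=0$, the ``only if'' direction is already secured by the linear independence of any Riesz basis, so no gap remains.
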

\begin{proof} If $\det M=  0$  then $\{\sum_{k=1}^pa_{1k}x_{km}, \sum_{k=1}^pa_{2k}x_{kn}, \dots, \sum_{k=1}^p a_{pk}x_{kn}\}$ is a linearly dependent system  for each $m \in \mathbb{Z}$, so that the sequence defined in  \eqref{lcriesz}
 is not a Riesz basis.
 
 Conversely, assume that $\det M \neq  0$. Denoting the new sequence defined in \eqref{lcriesz} as  $\{z_n\}_{n\in \mathbb{Z}}$ and its partition as  $\{z_{1n}\}_{n\in \mathbb{Z}}\cup \{z_{2n}\}_{n\in \mathbb{Z}} \cup \dots \cup \{z_{pn}\}_{n\in \mathbb{Z}}$,  we rewrite \eqref{lcriesz} using the  symbolic notation 
 \[ 
 M \big(x_{1n}, x_{2n}, \dots, x_{pn}\big)^\top= \big(z_{1n}, z_{2n}, \dots, z_{pn}\big)^\top\,, \quad n \in \mathbb{Z}\,.
 \]
It is easily deduced what we also symbolically write as
 \[ 
 \big(x_{1n}, x_{2n}, \dots, x_{pn}\big)^\top=  M^{-1}\big(z_{1n}, z_{2n}, \dots, z_{pn}\big)^\top\,, \quad n \in \mathbb{Z} \,.
 \]
Thus $\espan\{z_n \}_{n\in \mathbb{Z}}= \espan\{x_n\}_{n\in \mathbb{Z}}$ and consequently the sequence $\{z_n \}_{n\in \mathbb{Z}}$ is complete in $\mathcal{H}$.
\newline In addition,  for every finite  scalar sequence $\{c_m\}$ and  for every $\{z_m\}= \{\sum_{k=1}^p a_{j_m k}x_{kn_m}\}$ with $n_m \in \mathbb{Z}$ and $j_m \in \{1,2,\dots, p \}$, one has 
\begin{eqnarray*} 
 \big\|\sum_m c_m\,z_m\big\|^2&=&  \Big\|\sum_m c_m\big( \sum_{k=1}^p a_{j_m k}x_{kn_m}\big)\Big\|^2  \le B\sum_m \sum_{k=1}^p |c_m a_{j_m k}|^2 \\ 
 &=& B\sum_m |c_m|^2 \sum_{k=1}^p |a_{j_m k}|^2  \le B D\sum_m |c_m|^2 
 \end{eqnarray*}
 where $B$ is an upper Riesz bound of  $\{x_n\}_{n\in \mathbb{Z}}$ and  $D= \max_{j=1}^p \sum_{k=1}^p |a_{j k}|^2$. Similarly, 
 \begin{eqnarray*} 
 \big\|\sum_m c_m\,z_m\big\|^2&=&  \Big\|\sum_m c_m\big( \sum_{k=1}^p a_{j_m k}x_{kn_m}\big)\Big\|^2  \ge A\sum_m \sum_{k=1}^p |c_m a_{j_m k}|^2 \\ 
 &=& A\sum_m |c_m|^2 \sum_{k=1}^p |a_{j_m k}|^2  \ge AC\sum_m |c_m|^2 
 \end{eqnarray*}
 where $A$ is a lower Riesz bound of the Riesz basis $\{x_n\}_{n\in \mathbb{Z}}$ and  $C= \min_{j=1} ^p\sum_{k=1}^p |a_{j k}|^2$.
 Since $\det M \neq  0$, then $C >0$ and consequently, the sequence $\{z_n\}_{n\in \mathbb{Z}}$ satisfies  the Riesz sequence condition (see \cite[Theorem 3.6.6]{ole:03})
 \[
A'\sum_m |c_m|^2 \le \big\|\sum_m c_m\,z_m\big\|^2 \le B'\sum_m |c_m|^2\,.
\]
 with constants $A'= AC >0$ and $B'= BD>0$.
 
 Finally, since $\langle x_q, y_{q'} \rangle= \delta_{q, q'}$ and $\sum_{k=1}^pa_{jk} b_{kj'}=\delta_{j, j'}$ $$\big\langle\sum_{k=1}^pa_{jk}x_{kn},\sum_{k'=1}^p \overline{b}_{k'j'}y_{k'm}\ \big\rangle= \sum_{k, k'=1}^pa_{jk} b_{k'j'}\langle x_{kn}, y_{k'm} \rangle=\begin{cases} 1 & \text{ if } n \neq m \text{ and } j=j' \\ 
 0 & \text{ otherwise} \end{cases}$$ proving that $\big\{\sum_{k=1}^pa_{jk}x_{kn}\big\}_{j=1, 2, \dots ,p, \ n \in \mathbb{Z}}$ and $\big\{\sum_{k=1}^p \overline{b}_{kj}y_{kn}\big\}_{j=1, 2 , \dots p, \ n \in \mathbb{Z}}$ are a dual pair of Riesz bases for $\mathcal{H}$.
\end{proof}
Observe that in the case of one forward difference, the partition of the Riesz basis  $\{x_n\}_{n\in \mathbb{Z}}$ is $\{x_{2n}\}_{n\in \mathbb{Z}}\cup \{x_{2n+1}\}_{n\in \mathbb{Z}}$, the associated matrix is $M=\begin{pmatrix} 1&0 \\ -1& 1\\ \end{pmatrix}$; its inverse $\begin{pmatrix} 1&0 \\ 1& 1\\ \end{pmatrix}$ gives the dual Riesz basis $\{y_{2n}+y_{2n+1}\}_{n\in \mathbb{Z}} \cup \{y_{2n+1}\}_{n\in \mathbb{Z}}$.

\medskip

Although we will not derive overcomplete expansions in the sequel, it is worth to mention the frame counterpart to the above lemma. Under the circumstances of Lemma \ref{partition} we have:

\begin{lema}
Let $M=\big( a_{jk}\big) \in \mathbb{C}^{q\times p}$ be a matrix of scalars such that $q> p$ and $\text{rank}\,M=p$. By using the rows of $M$ we define
$z_{jn}=\sum_{k=1}^p a_{jk}\,x_{kn}$, $1\le j\le q$ and $n\in \mathbb{Z}$. Then the sequence $\{z_{1n}\}_{n\in \mathbb{Z}}\cup \{z_{2n}\}_{n\in \mathbb{Z}} \cup \dots \cup \{z_{qn}\}_{n\in \mathbb{Z}}$ is a frame for 
$\mathcal{H}$. Moreover, a dual frame $\{\widetilde{z}_{1n}\}_{n\in \mathbb{Z}}\cup \{\widetilde{z}_{2n}\}_{n\in \mathbb{Z}} \cup \dots \cup \{\widetilde{z}_{qn}\}_{n\in \mathbb{Z}}$ is obtained from the columns of a left inverse matrix $N=\big( b_{kj}\big) \in \mathbb{C}^{p\times q}$ of $M$ as $\widetilde{z}_{jn}=\sum_{k=1}^p \overline{b}_{kj}\,y_{kn}$, $1\le j\le q$ and $n\in \mathbb{Z}$. 
\end{lema}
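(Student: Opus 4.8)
The plan is to reproduce the skeleton of the proof of Lemma~\ref{partition}, replacing the square invertible matrix by a rectangular matrix of full column rank and the determinant condition by a positivity (smallest singular value) argument. Concretely, I would first establish the two frame inequalities for $\{z_{jn}\}$, reducing them to the frame bounds of the Riesz basis $\{x_n\}_{n\in\mathbb{Z}}$, and then verify the reconstruction identity for the proposed dual family using the left-inverse relation $NM=I_p$.

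For the frame bounds, fix $f\in\mathcal{H}$ and, for each $n\in\mathbb{Z}$, gather the coefficients into the vector $v_n:=(\langle f,x_{1n}\rangle,\dots,\langle f,x_{pn}\rangle)^\top\in\mathbb{C}^p$. Since $\langle f,z_{jn}\rangle=\sum_{k=1}^p\overline{a_{jk}}\,\langle f,x_{kn}\rangle$, the vector $(\langle f,z_{jn}\rangle)_{j=1}^q$ is obtained by applying to $v_n$ the matrix $\overline{M}$, whose singular values coincide with those of $M$. The hypothesis $\operatorname{rank}M=p$ forces $M^*M$ to be positive definite, so its smallest singular value $\sigma_{\min}(M)$ is strictly positive; hence $\sigma_{\min}(M)^2\|v_n\|^2\le\sum_{j=1}^q|\langle f,z_{jn}\rangle|^2\le\sigma_{\max}(M)^2\|v_n\|^2$. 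Summing over $n$ and using that $\{x_{kn}\}_{k,n}$ is merely a re-indexing of $\{x_m\}_m$ gives $\sum_n\|v_n\|^2=\sum_m|\langle f,x_m\rangle|^2$, a quantity lying between $A\|f\|^2$ and $B\|f\|^2$ by the Riesz-basis (frame) bounds $A,B$ of $\{x_m\}$. Combining, $\{z_{jn}\}$ is a frame with bounds $\sigma_{\min}(M)^2A$ and $\sigma_{\max}(M)^2B$.

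For the dual frame, the same estimate applied to $N$ (which has rank $p$, since $NM=I_p$) shows that $\{\widetilde z_{jn}\}$ is itself a frame, in particular a Bessel sequence. It then suffices to check the reconstruction identity. Writing $\langle f,\widetilde z_{jn}\rangle=\sum_{k=1}^p b_{kj}\langle f,y_{kn}\rangle$ and expanding,
\[
\sum_{j,n}\langle f,\widetilde z_{jn}\rangle\,z_{jn}=\sum_n\sum_{k,k'=1}^p\Big(\sum_{j=1}^q b_{kj}a_{jk'}\Big)\langle f,y_{kn}\rangle\,x_{k'n}=\sum_n\sum_{k=1}^p\langle f,y_{kn}\rangle\,x_{kn},
\]
where the inner sum equals $(NM)_{kk'}=\delta_{kk'}$. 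The right-hand side is $\sum_m\langle f,y_m\rangle\,x_m=f$, the dual Riesz expansion. Since both $\{z_{jn}\}$ and $\{\widetilde z_{jn}\}$ are Bessel and this identity holds for every $f$, they form a dual pair of frames (see \cite{ole:03}); the rearrangements are justified by the unconditional convergence of the expansions.

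The only genuinely delicate point is the lower frame bound, which is the sole place where $\operatorname{rank}M=p$ is used: for the rectangular matrix $M$ this must be expressed through $\sigma_{\min}(M)>0$ (equivalently, positive definiteness of $M^*M$) rather than through a nonvanishing determinant as in Lemma~\ref{partition}. The remaining ingredients---the Bessel bounds, the convergence issues, and the algebraic cancellation $NM=I_p$---are routine. The conceptual novelty is merely that, because $q>p$, the left inverse $N$ is not unique, so the construction produces one dual frame among infinitely many rather than the canonical dual.
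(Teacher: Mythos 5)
Your proof is correct, and its central step --- verifying the reconstruction identity $\sum_{j,n}\langle f,\widetilde z_{jn}\rangle z_{jn}=f$ through the cancellation $\sum_{j=1}^q b_{kj}a_{jk'}=(NM)_{kk'}=\delta_{kk'}$ --- is exactly the computation in the paper. Where you diverge is in how the frame property itself is obtained. The paper simply observes that both families are Bessel and then invokes \cite[Lemma 5.6.2]{ole:03}, which says that two Bessel sequences satisfying the reconstruction identity are automatically a dual pair of frames; the lower frame bound is never computed and the hypothesis $\operatorname{rank}M=p$ enters only through the existence of a left inverse $N$. You instead prove the frame inequalities directly, passing through the vectors $v_n=(\langle f,x_{kn}\rangle)_{k=1}^p$ and the singular values of $M$, which yields the explicit bounds $\sigma_{\min}(M)^2A$ and $\sigma_{\max}(M)^2B$ (and analogously for $N$). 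Your route costs a little more work and requires the (correct) observation that $\overline{M}$ has the same singular values as $M$, but it buys quantitative frame bounds and makes transparent where full column rank is used; the paper's route is shorter and defers the lower bound entirely to the cited lemma. Both arguments are sound, and both extend verbatim to the case where $\{x_n\}$ and $\{y_n\}$ are merely dual frames, as the paper remarks after the proof.
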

\begin{proof}
Clearly, both sequences are Bessel sequences. Following \cite[Lemma 5.6.2]{ole:03}, it suffices to prove that $x=\sum_{n\in \mathbb{Z}} \sum_{j=1}^q \langle x, \widetilde{z}_{jn}\rangle \,z_{jn}$ for each $x\in \mathcal{H}$. Indeed,
\[
\begin{split}
&\sum_{n\in \mathbb{Z}} \sum_{j=1}^q \big\langle x, \sum_{k=1}^p \overline{b}_{kj}\,y_{kn} \big \rangle \sum_{k=1}^p a_{jk}\,x_{kn}=\sum_{n\in \mathbb{Z}}\sum_{k,k'=1}^p  
\big(\sum_{j=1}^q b_{kj}a_{jk'}\big) \langle x, y_{kn}\rangle x_{k'n}\\
&=\sum_{n\in \mathbb{Z}}\sum_{k,k'=1}^p  \delta_{k,k'} \langle x, y_{kn}\rangle x_{k'n}=\sum_{n\in \mathbb{Z}}\sum_{k=1}^p\langle x, y_{kn}\rangle x_{kn}=\sum_{n\in \mathbb{Z}}\langle x, y_n\rangle x_n=x\,,
\end{split}
\]
since $\sum_{j=1}^q b_{kj}a_{jk'}= \delta_{k,k'}$, and $\{x_n\}_{n\in \mathbb{Z}}$ and $\{y_n\}_{n\in \mathbb{Z}}$ form a pair of dual Riesz bases.
\end{proof}

Observe that the set of all left inverse matrices $N$ of $M$ can be expressed as $N=M^\dag+U\big[\mathbb{I}_q-M\,M^\dag\big]$ where $U$ denotes an arbitrary $p\times q$ matrix and $M^\dag=\big( M^* M\big)^{-1}M^*$ is the Moore-Penrose pseudo-inverse of the matrix $M$.

Observe that the proof of the above lemma is also valid in case the sequences $\{x_n\}_{n\in \mathbb{Z}}$ and $\{y_n\}_{n\in \mathbb{Z}}$ form a pair of dual frames for $\mathcal{H}$.

\section{The resulting sampling formulas}
\label{section3}
Along this section we fix  $a\in [0,1)$ such that $0<\|K_a\|_0\le \|K_a\|_\infty < \infty$, and we will denote 
\[
x_n:=e^{-2\pi i n x} K_a(x)\quad \text{and} \quad y_n:=e^{-2\pi i n x}/\overline{K_a(x)}\,, \quad n\in \mathbb{Z}\,, 
\]
the associated pair of dual Riesz bases for $L^2(0,1)$ which appeared in Section \ref{subsection2-2}. The general procedure is easy: For any $f\in V_\varphi^2$, let $F\in L^2(0,1)$ be such that $\mathcal{T}_\varphi F=f$; once we identify the Riesz basis for $L^2(0,1)$ associated with successive differences considered on the data samples, we should compute its dual Riesz basis  by using Lemma \ref{partition}. Then we expand $F$ with respect to this dual basis in $L^2(0,1)$. Finally, by applying the isomorphism $\mathcal{T}_\varphi$ we obtain a sampling formula in $V_\varphi^2$ with $L^2(\mathbb{R})$-convergence; as $V_\varphi^2$ is a RKHS we obtain the pointwise convergence properties of the sampling expansion: the convergence will be uniform on $\mathbb{R}$. It will be also absolute due the unconditional convergence of an expansion with respect to a Riesz basis.

Next we will exhibit a variety of examples involving different types of samples obtained from {\em forward, backward or central difference operators}, or from {\em average operators}. 

\subsection{The case involving  forward differences}
\label{subsection3-1}
\subsubsection*{Using first and second forward differences}
For the samples $f(a+3n)$, $\Delta_+ f(a+3n)$, and $\Delta^2_+ f(a+3n)$, $n\in \mathbb{Z}$, we have (see \eqref{d+}):
\[
\begin{split}
f(a+3n)&=\big\langle F, e^{-2\pi i (3n) x} K_a \big\rangle_{L^2(0,1)}\,, \\ 
\Delta_+ f(a+3n)&=\big\langle F, e^{-2\pi i (3n+1) x} K_a-e^{-2\pi i (3n) x} K_a\big\rangle_{L^2(0,1)}\,, \\
\Delta^2_+ f(a+3n) &= \big\langle F, e^{-2\pi i (3n+2) x} K_a-2e^{-2\pi i (3n+1) x} K_a+e^{-2\pi i (3n) x} K_a \big\rangle_{L^2(0,1)}\,.
\end{split}
\]
If we consider the partition $\{x_{3n}\}_{n\in \mathbb{Z}}\cup\{x_{3n+1}\}_{n\in \mathbb{Z}} \cup \{x_{3n+2}\}_{n\in \mathbb{Z}}$ of $\{x_n\}_{n\in \mathbb{Z}}$, according to Lemma \ref{partition} the sequence 
\[
\{x_{3n}\}_{n\in \mathbb{Z}} \cup \{x_{3n+1}-x_{3n}\}_{n\in \mathbb{Z}}\cup \{x_{3n+2}-2x_{3n+1}+x_{3n}\}_{n\in \mathbb{Z}}
\]
is a Riesz basis for $L^2(0,1)$ and its dual Riesz basis is 
\[
\{y_{3n}+y_{3n+1}+y_{3n+2}\}_{n\in \mathbb{Z}}\cup \{y_{3n+1}+2y_{3n+2}\}_{n\in \mathbb{Z}}\cup \{y_{3n+2}\}_{n\in \mathbb{Z}}\,,
\]
since
\[
\begin{pmatrix}
1&0&0\\
-1&1&0\\
1&-2&1\\
\end{pmatrix}^{-1}=\begin{pmatrix}
1&0&0\\
1&1&0\\
1&2&1\\
\end{pmatrix}
\]
Expanding $F$ with respect to the above dual basis we obtain
\[
\begin{split}
F=\sum_{n\in \mathbb{Z}} \Big\{&f(a+3n) \big[y_{3n}+y_{3n+1}+y_{3n+2}\big]+\Delta_+ f(a+3n)\big[y_{3n+1}+2y_{3n+2}\big]+\\
&+\Delta^2_+ f(a+3n)\,y_{3n+2}\Big\} \qquad \text{in $L^2(0,1)$}\,.
\end{split}
\]
Applying the isomorphism $\mathcal{T}_\varphi$ and its shifting property \eqref{shifting}, we obtain in $V_\varphi^2$ the sampling formula

\begin{cgris*}{}
{\[
\begin{split}
f(t)=\sum_{n\in \mathbb{Z}}& \Big\{f(a+3n) \big[S_a(t-3n)+S_a(t-3n-1)+S_a(t-3n-2)\big]\\
&+\Delta_+ f(a+3n)\big[S_a(t-3n-1)+2S_a(t-3n-2)\big]\\
&+\Delta^2_+ f(a+3n)\,S_a(t-3n-2)\Big\}\,, \quad t\in \mathbb{R}\,.
\end{split}
\]}
\end{cgris*}
Observe that, due to the shifting property \eqref{shifting}, we have $\mathcal{T}_\varphi \big(y_{pn+k}\big)=S_a\big(t-(pn+k)\big)$.
\subsubsection*{Using $p-1$ forward differences}
Proceeding as in the above example, for the samples $f(a+pn)$, $\Delta_+ f(a+pn)$, \dots,  and $\Delta^{p-1}_+ f(a+pn)$, $n\in \mathbb{Z}$, we have (see \eqref{d+}):
{\small
\[
\begin{split}
f(a+pn)&=\big\langle F, e^{-2\pi i (pn) x} K_a \big\rangle_{L^2(0,1)}\,, \\ 
\Delta_+ f(a+pn)&=\big\langle F, e^{-2\pi i (pn+1) x} K_a-e^{-2\pi i (pn) x} K_a\big\rangle_{L^2(0,1)}\,, \\
\Delta^2_+ f(a+pn) &= \big\langle F, e^{-2\pi i (pn+2) x} K_a-2e^{-2\pi i (pn+1) x} K_a+e^{-2\pi i (pn) x} K_a \big\rangle_{L^2(0,1)}\, \\
                       \cdots \cdots &\\ 
\Delta^{p-1}_+ f(a+pn) &= \big\langle F, e^{-2\pi i (pn+p-1) x} K_a-(p-1)e^{-2\pi i (pn+p-2) x} K_a+\binom{p-1}{2}e^{-2\pi i (pn+p-3) x} K_a \\
&+ \dots +(-1)^{p-2}\binom{p-1}{p-2}e^{-2\pi i (pn+1) x} K_a +(-1)^{p-1}e^{-2\pi i (pn) x} K_a \big\rangle_{L^2(0,1)}\, 
\end{split}
\]
}
If we consider the partition $\{x_{pn}\}_{n\in \mathbb{Z}}\cup\{x_{pn+1}\}_{n\in \mathbb{Z}} \cup \dots \cup \{x_{pn+p-1}\}_{n\in \mathbb{Z}}$ of $\{x_n\}_{n\in \mathbb{Z}}$, according to Lemma \ref{partition} the sequence defined by using the rows of the lower triangular matrix:
\[
\begin{pmatrix}
1 &&&&&& \\
-1 & 1 &&&&& \\
1 & -\binom{2}{1} & 1 &&&&\\
-1 & \binom{3}{1} & -\binom{3}{2} & 1&&& \\
\vdots&\vdots&\vdots&\vdots&\ddots&&\\
(-1)^{p-2}&(-1)^{p-3}\binom{p-2}{1}&(-1)^{p-4}\binom{p-2}{2} &(-1)^{p-5}\binom{p-2}{3}& \hdots& 1& \\
(-1)^{p-1}&(-1)^{p-2}\binom{p-1}{1}&(-1)^{p-3}\binom{p-1}{2} &(-1)^{p-4}\binom{p-1}{3}& \hdots& -\binom{p-1}{p-2}& 1\\
\end{pmatrix}
\]
is a Riesz basis for $L^2(0,1)$ and its dual Riesz basis and, consequently, the sampling functions are obtained from the columns of its inverse which is:
\[
\begin{pmatrix}
1 &&&&&& \\
1 & 1 &&&&&  \\
1 & \binom{2}{1} & 1 &&&&\\
1 & \binom{3}{1} & \binom{3}{2} & 1&&& \\
\vdots&\vdots&\vdots&\vdots&\ddots&& \\
1&\binom{p-2}{1}&\binom{p-2}{2}&\binom{p-2}{3}& \hdots & 1& \\
1&\binom{p-1}{1}&\binom{p-1}{2} &\binom{p-1}{3}& \hdots& \binom{p-1}{p-2}& 1\\
\end{pmatrix}
\]
The resulting sampling formula in $V_\varphi^2$ reads:

\begin{cgris*}{}
{\[
\begin{split}
f(t)&=\sum_{n\in \mathbb{Z}}\Big\{f(a+pn) \,T_{a,1}(t-pn)+\Delta_+ f(a+pn)\,T_{a,2}(t-pn)\\ 
&+\Delta^2_+ f(a+pn)\,T_{a,3}(t-pn)+\cdots +\Delta^{p-1}_+ f(a+pn)\,T_{a,p}(t-pn)\Big\}\,, \quad t\in \mathbb{R}\,,
\end{split}
\]}
\end{cgris*}
where, for $1\le k\le p$, the sampling function $T_{a,k}$ is given by
\[
\begin{split}
T_{a,k}(t)&=\binom{k-1}{k-1}S_a(t-(k-1))+\binom{k}{k-1}S_a(t-k)+\\ 
&+\binom{k+1}{k-1}S_a(t-(k+1))+\cdots+\binom{p-1}{k-1}S_a(t-(p-1))\,, \quad t\in \mathbb{R}\,.
\end{split}
\]
Observe that the sampling function $T_{a,k}$ has $p-k+1$ summands, $1\le k\le p$.
\subsection{The general case involving $p-1$ backward differences}
\label{subsection3-2}
Here, for the samples $f(a+pn)$, $\Delta_- f(a+pn)$, \dots,  and $\Delta^{p-1}_- f(a+pn)$, $n\in \mathbb{Z}$, we have (see \eqref{d-}):
{\small
\[
\begin{split}
f(a+pn)&=\big\langle F, e^{-2\pi i (pn) x} K_a \big\rangle_{L^2(0,1)}\,, \\ 
\Delta_- f(a+pn)&=\big\langle F, e^{-2\pi i (pn) x} K_a-e^{-2\pi i (pn-1) x} K_a\big\rangle_{L^2(0,1)}\,, \\
\Delta^2_- f(a+pn) &= \big\langle F, e^{-2\pi i (pn) x} K_a-2e^{-2\pi i (pn-1) x} K_a+e^{-2\pi i (pn-2) x} K_a \big\rangle_{L^2(0,1)}\, \\
                               \cdots \cdots &\\ 
\Delta^{p-1}_- f(a+pn) &= \big\langle F, e^{-2\pi i (pn) x} K_a-(p-1)e^{-2\pi i (pn-1) x} K_a+\binom{p-1}{2}e^{-2\pi i (pn-2) x} K_a \\
&+ \dots +(-1)^{p-2}\binom{p-1}{p-2}e^{-2\pi i (pn-p+2) x} K_a +(-1)^{p-1}e^{-2\pi i (pn-p+1) x} K_a \big\rangle_{L^2(0,1)}\, 
\end{split}
\]
}
If we consider the partition $\{x_{pn-p+1}\}_{n\in \mathbb{Z}}\cup\{x_{pn-p+2}\}_{n\in \mathbb{Z}} \cup \dots \cup \{x_{pn-1}\}_{n\in \mathbb{Z}} \cup \{x_{pn}\}_{n\in \mathbb{Z}}$ of $\{x_n\}_{n\in \mathbb{Z}}$, according to Lemma \ref{partition} the sequence defined by using the rows of the upper triangular matrix: 
\[
\begin{pmatrix}
(-1)^{p-1} &(-1)^{p-2}\binom{p-1}{1}&(-1)^{p-3}\binom{p-1}{2}&\hdots&-\binom{p-1}{3}&\binom{p-1}{2}&-\binom{p-1}{1}& 1\\
 & (-1)^{p-2} &(-1)^{p-3}\binom{p-2}{1}&\hdots&-\binom{p-2}{3}&\binom{p-2}{2}&-\binom{p-2}{1}&1 \\
 &&(-1)^{p-3}&\hdots&-\binom{p-3}{3}&\binom{p-3}{2}&-\binom{p-3}{1}&1\\
&&&\ddots&\vdots&\vdots&\vdots&\vdots\\
&& &&-1&\binom{3}{2}&-\binom{3}{1}& 1\\
&&&&&1&-\binom{2}{1}&1\\
&&&&&&-1&1 \\
&&&&& & & 1\\
\end{pmatrix}
\]
is a Riesz basis for $L^2(0,1)$ and its dual Riesz basis and, consequently, the sampling functions are obtained from the columns of its inverse which here coincides with itself. Therefore, writing from the last column to the first one, the resulting sampling formula in $V_\varphi^2$ reads:

\begin{cgris*}{}
{
\[
\begin{split}
f(t)&=\sum_{n\in \mathbb{Z}}\Big\{f(a+pn) \,\widetilde{T}_{a,1}(t-pn)+\Delta_- f(a+pn)\,\widetilde{T}_{a,2}(t-pn)\\ 
&+\Delta^2_- f(a+pn)\,\widetilde{T}_{a,3}(t-pn)+\cdots +\Delta^{p-1}_- f(a+pn)\,\widetilde{T}_{a,p}(t-pn)\Big\}\,, \quad t\in \mathbb{R}\,,
\end{split}
\]}
\end{cgris*}
where, for $1\le k\le p$, the sampling function $\widetilde{T}_{a,k}$ is given by
\[
\begin{split}
\widetilde{T}_{a,k}(t)&=(-1)^{k-1}\Big[\binom{p-1}{k-1}S_a(t+p-1)+\binom{p-2}{k-1}S_a(t+p-2)+\\
&+\binom{p-3}{k-1}S_a(t+p-3)+\cdots+\binom{k-1}{k-1}S_a(t+k-1)\Big]\,, \quad t\in \mathbb{R}\,.
\end{split}
\]
Observe that the sampling function $\widetilde{T}_{a,k}$ has $p-k+1$ summands, $1\le k\le p$.
\subsection{Two representative examples involving forward and backward differences}
\label{subsection3-3}
\subsubsection*{Using first forward and backward differences}
For the samples $f(a+3n)$, $\Delta_- f(a+3n)$, and $\Delta_+ f(a+3n)$, $n\in \mathbb{Z}$, we have:
\[
\begin{split}
\Delta_- f(a+3n)&=\big\langle F, e^{-2\pi i (3n) x} K_a-e^{-2\pi i (3n-1) x} K_a \big\rangle_{L^2(0,1)}\,, \\
f(a+3n)&=\big\langle F, e^{-2\pi i (3n) x} K_a \big\rangle_{L^2(0,1)}\,, \\ 
\Delta_+ f(a+3n)&=\big\langle F, e^{-2\pi i (3n+1) x} K_a-e^{-2\pi i (3n) x} K_a\big\rangle_{L^2(0,1)}\,.
\end{split}
\]
If we consider the partition $\{x_{3n-1}\}_{n\in \mathbb{Z}}\cup\{x_{3n}\}_{n\in \mathbb{Z}} \cup \{x_{3n+1}\}_{n\in \mathbb{Z}}$ of $\{x_n\}_{n\in \mathbb{Z}}$, according to Lemma \ref{partition} the sequence 
\[
\{-x_{3n-1}+x_{3n}\}_{n\in \mathbb{Z}} \cup \{x_{3n}\}_{n\in \mathbb{Z}}\cup \{-x_{3n}+x_{3n+1}\}_{n\in \mathbb{Z}}
\] 
is a Riesz basis for $L^2(0,1)$ and its dual Riesz basis is
\[
\{-y_{3n-1}\}_{n\in \mathbb{Z}} \cup \{y_{3n-1}+y_{3n}+y_{3n+1}\}_{n\in \mathbb{Z}}\cup \{y_{3n+1}\}_{n\in \mathbb{Z}}
\]
since
\[
\begin{pmatrix}
-1&1&0\\
0&1&0\\
0&-1&1\\
\end{pmatrix}^{-1}=\begin{pmatrix}
-1&1&0\\
0&1&0\\
0&1&1\\
\end{pmatrix}
\]
The resulting sampling formula in $V_\varphi^2$ reads:

\begin{cgris*}{}
{
\[
\begin{split}
f(t)=\sum_{n\in \mathbb{Z}}\Big\{&-\Delta_- f(a+3n)\, S_a(t-3n+1)\\
&+f(a+3n) \big[S_a(t-3n+1)+S_a(t-3n)+S_a(t-3n-1)\big]\\
&+ \Delta_+ f(a+3n)\,S_a(t-3n-1)\Big\}\,,\quad t\in \mathbb{R}\,.
\end{split}
\]}
\end{cgris*}
\subsubsection*{Using first backward and first and second forward differences}
For the samples $f(a+4n)$, $\Delta_+ f(a+4n)$, $\Delta^2_+ f(a+4n)$, and $\Delta_- f(a+4n)$, $n\in \mathbb{Z}$, we have (see \eqref{d+} and \eqref{d-}):
\[
\begin{split}
\Delta_- f(a+4n)&=\big\langle F, e^{-2\pi i (4n) x} K_a-e^{-2\pi i (4n-1) x} K_a \big\rangle_{L^2(0,1)}\,, \\
f(a+4n)&=\big\langle F, e^{-2\pi i (4n) x} K_a \big\rangle_{L^2(0,1)}\,, \\ 
\Delta_+ f(a+4n)&=\big\langle F, e^{-2\pi i (4n+1) x} K_a-e^{-2\pi i (4n) x} K_a\big\rangle_{L^2(0,1)}\,,\\
\Delta^2_+ f(a+4n)&=\big\langle F, e^{-2\pi i (4n+2) x} K_a-2e^{-2\pi i (4n+1) x} K_a+e^{-2\pi i (4n) x} K_a \big\rangle_{L^2(0,1)}\,.
\end{split}
\]
If we consider the partition $\{x_{4n-1}\}_{n\in \mathbb{Z}}\cup\{x_{4n}\}_{n\in \mathbb{Z}} \cup \{x_{4n+1}\}_{n\in \mathbb{Z}}\cup \{x_{4n+2}\}_{n\in \mathbb{Z}}$ of $\{x_n\}_{n\in \mathbb{Z}}$, according to Lemma \ref{partition} the sequence 
\[
\{-x_{4n-1}+x_{4n}\}_{n\in \mathbb{Z}}\cup \{x_{4n}\}_{n\in \mathbb{Z}}\cup \{-x_{4n}+x_{4n+1}\}_{n\in \mathbb{Z}}
\cup \{ x_{4n}-2x_{4n+1}+x_{4n+2}\}_{n\in \mathbb{Z}}
\] 
is a Riesz basis for $L^2(0,1)$ and its dual Riesz basis is
\[
\{-y_{4n-1}\}_{n\in \mathbb{Z}}\cup \{y_{4n-1}+y_{4n}+y_{4n+1}+y_{4n+2}\}_{n\in \mathbb{Z}}
\cup \{y_{4n+1}+2y_{4n+2}\}_{n\in \mathbb{Z}}\cup\{y_{4n+2}\}_{n\in \mathbb{Z}}
\]
since
\[
\begin{pmatrix}
-1&1&0&0\\
0&1&0&0\\
0&-1&1&0\\
0&1&-2&1\\
\end{pmatrix}^{-1}=\begin{pmatrix}
-1&1&0&0\\
0&1&0&0\\
0&1&1&0\\
0&1&2&1\\
\end{pmatrix}
\]
The resulting sampling formula in $V_\varphi^2$ reads:

\begin{cgris*}{}
{
\[
\begin{split}
f(t)=\sum_{n\in \mathbb{Z}}\Big\{&-\Delta_- f(a+4n)S_a(t-4n+1)+f(a+4n) \big[S_a(t-4n+1)+S_a(t-4n)\\
&+S_a(t-4n-1)+S_a(t-4n-2)\big]\\
&+ \Delta_+ f(a+4n)\big[S_a(t-4n-1)+2S_a(t-4n-2) \big]\\
&+\Delta^2_+ f(a+4n)\,S_a(t-4n-2)\Big\}\,,\quad t\in \mathbb{R}\,.
\end{split}
\]
}
\end{cgris*}

From the above example and Sections \ref{subsection3-1} and \ref{subsection3-2} the general case involving $p$ forward and $q-1$ backward differences with sampling period $p+q$ easily comes out.
\subsection{Some examples involving averages and differences}
\label{subsection3-4}
\subsubsection*{Using forward averages and differences}
For the samples $\mu_+ f(a+2n)$ and $\Delta_+ f(a+2n)$, $n\in \mathbb{Z}$, we have:
\[
\begin{split}
\mu_+ f(a+2n)&=\big\langle F, \frac{1}{2}(e^{-2\pi i (2n+1) x} K_a+e^{-2\pi i (2n) x} K_a \big\rangle_{L^2(0,1)}\,, \\
\Delta_+ f(a+2n)&=\big\langle F, e^{-2\pi i (2n+1) x} K_a-e^{-2\pi i (2n) x} K_a\big\rangle_{L^2(0,1)}\,.
\end{split}
\]
If we consider the partition $\{x_{2n}\}_{n\in \mathbb{Z}}\cup\{x_{2n+1}\}_{n\in \mathbb{Z}}$ of $\{x_n\}_{n\in \mathbb{Z}}$, according to Lemma \ref{partition} the sequence 
\[
\{\frac{1}{2}x_{2n}+\frac{1}{2}x_{2n+1}\}_{n\in \mathbb{Z}} \cup \{-x_{2n}+x_{2n+1}\}_{n\in \mathbb{Z}}
\] 
is a Riesz basis for $L^2(0,1)$ and its dual Riesz basis is
\[
\{y_{2n}+y_{2n+1}\}_{n\in \mathbb{Z}} \cup \{-\frac{1}{2}y_{2n}+\frac{1}{2}y_{2n+1}\}_{n\in \mathbb{Z}}
\]
since
\[
\begin{pmatrix}
1/2&1/2\\
-1&1\\
\end{pmatrix}^{-1}=\begin{pmatrix}
1&-1/2\\
1&1/2\\
\end{pmatrix}
\]
The resulting sampling formula in $V_\varphi^2$ reads:

\begin{cgris*}{}
{
\[
\begin{split}
f(t)=\sum_{n\in \mathbb{Z}}&\Big\{\mu_+ f(a+2n)\big[S_a(t-2n)+S_a(t-2n-1)\big] \\
&+ \Delta_+ f(a+2n)\frac{1}{2}\big[-S_a(t-2n)+S_a(t-2n-1)\big]\Big\}\,,\quad t\in \mathbb{R}\,.
\end{split}
\]}
\end{cgris*}
\subsubsection*{Using central averages and differences} 
For the samples $f(a+3n)$, $\mu_0 f(a+3n)$, and $\Delta_0 f(a+3n)$, $n\in \mathbb{Z}$, we have:
\[
\begin{split}
\mu_0 f(a+3n)&=\big\langle F, \frac{1}{2}(e^{-2\pi i (3n+1) x} K_a+e^{-2\pi i (3n-1) x} K_a \big\rangle_{L^2(0,1)}\,, \\
f(a+3n)&=\big\langle F, e^{-2\pi i (3n) x} K_a \big\rangle_{L^2(0,1)}\,, \\ 
\Delta_0 f(a+3n)&=\big\langle F, e^{-2\pi i (3n+1) x} K_a-e^{-2\pi i (3n-1) x} K_a\big\rangle_{L^2(0,1)}\,.
\end{split}
\]
If we consider the partition $\{x_{3n-1}\}_{n\in \mathbb{Z}}\cup\{x_{3n}\}_{n\in \mathbb{Z}} \cup \{x_{3n+1}\}_{n\in \mathbb{Z}}$ of $\{x_n\}_{n\in \mathbb{Z}}$, according to Lemma \ref{partition} the sequence 
\[
\{x_{3n}\}_{n\in \mathbb{Z}} \cup \{\frac{1}{2}x_{3n-1}+\frac{1}{2}x_{3n+1}\}_{n\in \mathbb{Z}}\cup \{-x_{3n-1}+x_{3n+1}\}_{n\in \mathbb{Z}}
\] 
is a Riesz basis for $L^2(0,1)$ and its dual Riesz basis is
\[
\{y_{3n}\}_{n\in \mathbb{Z}} \cup \{y_{3n-1}+y_{3n+1}\}_{n\in \mathbb{Z}}\cup \{-\frac{1}{2}y_{3n-1}+\frac{1}{2}y_{3n+1}\}_{n\in \mathbb{Z}}
\]
since
\[
\begin{pmatrix}
0&1&0\\
1/2&0&1/2\\
-1&0&1\\
\end{pmatrix}^{-1}=\begin{pmatrix}
0&1&-1/2\\
1&0&0\\
0&1&1/2\\
\end{pmatrix}
\]
The resulting sampling formula in $V_\varphi^2$ reads:

\begin{cgris*}{}
{
\[
\begin{split}
f(t)=\sum_{n\in \mathbb{Z}}&\Big[f(a+3n)\, S_a(t-3n)+\mu_0 f(a+3n) \big[S_a(t-3n+1)+S_a(t-3n-1)\big]\\
&+ \Delta_0 f(a+3n)\frac{1}{2}\big[-S_a(t-3n+1)+S_a(t-3n-1)\big]\Big]\,,\quad t\in \mathbb{R}\,.
\end{split}
\]}
\end{cgris*}
\section{Sampling results for the two-dimensional case}
\label{section4}
Firstly, we recall the usual sampling formulas in the two-dimensional setting:
\subsection{Sampling formulas for two-dimensional shift-invariant subspaces}
\label{subsectio4-1}
\subsubsection*{The general two-dimensional case}
Consider a generator $\Phi \in L^2(\mathbb{R}^2)$ such that the sequence $\big\{\Phi (t-n,s-m)\big\}_{n,m\in \mathbb{Z}}$ forms a Riesz sequence for $L^2(\mathbb{R}^2)$. Thus,
\[
V^2_\Phi:=\overline{\espan}\{\Phi (t-n,s-m)\}_{n,m\in \mathbb{Z}}=\Big\{\sum_{n,m\in \mathbb{Z}} a_{nm}\,\Phi(t-n,s-m)\,\,:\,\, \{a_{nm}\}\in \ell^2(\mathbb{Z}^2) \Big\}\,.
\]
Assuming that $\Phi$ is continuous in $\mathbb{R}^2$ and that $\sum_{n,m\in \mathbb{Z}} |\Phi (t-n,s-m)|^2$ is bounded in $[0,1]^2$ one can easily deduce the same results that in Section \ref{section2} for the one dimensional case. For instance, the isomorphism
$\mathcal{T}_\Phi: L^2(0,1)^2 \longrightarrow V^2_\Phi$ which maps the orthonormal basis $\{e^{-2\pi i n x}\,e^{-2\pi i m y}\}_{n,m\in \mathbb{Z}}$ onto the Riesz basis 
$\{\Phi(t-n,s-m)\}_{n,m\in \mathbb{Z}}$ satisfies for any $F\in L^2(0,1)^2$ the shifting property
\[
\mathcal{T}_\Phi \big[e^{-2\pi i n x} e^{-2\pi i m y} F(x,y)\big](t, s)=\mathcal{T}_\Phi[F](t-n,s-m)\,, \quad t, s\in \mathbb{R}\,,\,n,m\in \mathbb{Z}\,.
\]
Moreover, any $f=\mathcal{T}_\Phi F \in V^2_\varphi$ can be expressed as
\begin{equation}
\label{expression3}
f(t,s)=\Big\langle F(x,y), K_{t,s}(x,y) \Big\rangle_{L^2(0,1)^2}\,, \quad t, s\in \mathbb{R}\,,
\end{equation}
where  $K_{t,s}(x,y)=\sum_{n,m\in \mathbb{Z}} \overline{\Phi(t-n, s-m)}\, e^{-2\pi i nx}e^{-2\pi i my}\in L^2(0,1)^2$. The details can be found in \cite{garcia:09}.
Thus, for fixed $a, b\in [0,1)$  and $n, m \in \mathbb{Z}$ we get the expression for the samples
\[
f(a+n, b+m)=\big\langle F, K_{a+n,b+m}\big\rangle_{L^2(0,1)^2} = \big\langle F, {\rm e}^{-2\pi i n x}{\rm e}^{-2\pi i m y}K_{a,b}(x,y)\big\rangle_{L^2(0,1)^2}\,.
\]
The sequence $\big\{{\rm e}^{-2\pi i n x}\,{\rm e}^{-2\pi i my}K_{a,b}(x,y)\big\}_{n,m\in \mathbb{Z}}$ is a Riesz basis for $L^2(0,1)^2$ if and only if the inequalities $0<\|K_{a,b}\|_0\le \|K_{a,b}\|_\infty < \infty$ hold.  Besides, its dual Riesz basis is the sequence $\big\{{\rm e}^{-2\pi i n x}\,{\rm e}^{-2\pi i my}/\overline{K_{a,b}}(x,y)\big\}_{n,m\in \mathbb{Z}}$.

Proceeding as in Section \ref{subsection2-2} we derive for any $f\in V^2_\Phi$ the sampling formula
\begin{equation}
\label{sf3}
f(t,s)=\sum_{n,m\in \mathbb{Z}} f(a+n, b+m)\,S_{a,b}(t-n,s-m)\,, \quad t, s\in \mathbb{R}\,,
\end{equation}
where $S_{a,b}:=\mathcal{T}_\Phi\big(1/\overline{K_{a,b}}\big)\in V_\Phi^2$. The  convergence of the series is absolute due to the unconditional convergence of a Riesz basis expansion and it is also uniform on $\mathbb{R}^2$ since the shift-invariant space $V_\Phi^2$ is a RKHS contained in $L^2(\mathbb{R}^2)$.

\subsubsection*{The case of two separable variables}
Consider two generators $\varphi(x)$ and $\psi(y)$ in $L^2(\mathbb{R})$ satisfying the same hypotheses as in Section \ref{section2}, and  define $\Phi(x,y):=\varphi(x)\psi(y)$ in $L^2(\mathbb{R}^2)$ and its associated shift-invariant subspace $V^2_{\varphi\psi}:=V^2_\Phi$. It can be seen as the tensor product $V^2_\varphi \otimes V^2_\psi$. For a brief on tensor products see, for instance, \cite{garcia:16} and references therein. 

Straightforward, $\mathcal{T}_{\varphi\psi}:=\mathcal{T}_\Phi=\mathcal{T}_\varphi \otimes \mathcal{T}_\psi$ holds and, consequently, for $H, G\in L^2(0,1)$ and $n, m \in \mathbb{Z}$, the shifting property reads:
\[
\mathcal{T}_{\varphi\psi} \big[e^{-2\pi i n x} e^{-2\pi i m y} H(x)G(y)\big](t, s)=\mathcal{T}_\varphi[H](t-n)\mathcal{T}_\psi[G](s-m)\,, \quad t, s\in \mathbb{R}\,.
\] 
Besides, any $f=\mathcal{T}_{\varphi\psi} F \in V^2_{\varphi\psi}$, where $F\in L^2(0,1)^2$, can be expressed as
\begin{equation}
\label{expression2}
f(t,s)=\Big\langle F(x,y), K_t(x)\widetilde{K}_s(y) \Big\rangle_{L^2(0,1)^2}\,, \quad t, s\in \mathbb{R}\,,
\end{equation}
where  $K_t(x)=\sum_{n\in \mathbb{Z}} \overline{\varphi(t-n)}\, e^{-2\pi i nx}$ and $\widetilde{K}_s(y)=\sum_{m\in \mathbb{Z}} \overline{\psi(s-m)}\, e^{-2\pi i my}$ belong to $L^2(0,1)$. Thus, for fixed $a, b\in [0,1)$ and $n, m \in \mathbb{Z}$ we get the following expression for the samples
\[
f(a+n, b+m)=\big\langle F, K_{a+n}\,\widetilde{K}_{b+m}\big\rangle_{L^2(0,1)^2} = \big\langle F, {\rm e}^{-2\pi i n x}K_a(x)\, {\rm e}^{-2\pi i m y}\widetilde{K}_b(y)\big\rangle_{L^2(0,1)^2}\,.
\]
Assuming that $0<\|K_a\|_0\le \|K_a\|_\infty < \infty$ and $0<\|\widetilde{K}_b\|_0\le \|\widetilde{K}_b\|_\infty < \infty$, the sequence $\big\{{\rm e}^{-2\pi i n x}{\rm e}^{-2\pi i m y}K_a(x)\widetilde{K}_b(y) \big\}_{n,m\in \mathbb{Z}}$ is a Riesz basis for $L^2(0,1)^2$ with dual basis 
$\big\{{\rm e}^{-2\pi i n x}{\rm e}^{-2\pi i m y}/(\overline{K_a(x)}\overline{\widetilde{K}_b(y)})\big\}_{n,m\in \mathbb{Z}}$. By using the notation in Section \ref{section3} the sequences $\big\{x_n\otimes \widetilde{x}_m\big\}_{n,m\in \mathbb{Z}}$ and $\big\{y_n\otimes \widetilde{y}_m\big\}_{n,m\in \mathbb{Z}}$ form a pair of dual Riesz bases in $L^2(0,1)^2$. 

Proceeding as in Section \ref{subsection2-2} we derive for any $f\in V^2_{\varphi\psi}$ the sampling formula
\begin{equation}
\label{sf2}
f(t,s)=\sum_{n,m\in \mathbb{Z}} f(a+n, b+m)\,S_a(t-n)\,\widetilde{S}_b(s-m)\,, \quad t, s\in \mathbb{R}\,,
\end{equation}
where $S_a:=\mathcal{T}_\varphi\big(1/\overline{K_a}\big)\in V_\varphi^2$ and $\widetilde{S}_b:=\mathcal{T}_\psi\big(1/\overline{\widetilde{K}_b}\big)\in V_\psi^2$. The  convergence of the series is absolute due to the unconditional convergence of a Riesz basis expansion and it is also uniform on $\mathbb{R}^2$ since the shift-invariant space $V_{\varphi\psi}^2$ is a RKHS contained in $L^2(\mathbb{R}^2)$.

\subsection{The resulting sampling formulas}
\label{subsection4-2}
Throughout this section we will use the  {\em forward differences} defined as:
\[
\Delta_+^{k,k'}f=\Delta_+^{1,0} \big(\Delta_+^{k-1,k'}f\big)=\Delta_+^{0,1}\big(\Delta_+^{k,k'-1}f \big)\,,
\]
where
\[
\Delta_+^{1,0}f(t,s)=f(t+1,s)-f(t,s)\quad \text{and}\quad \Delta_+^{0,1}f(t,s)=f(t,s+1)-f(t,s)\,;
\]
we adopt the convention $\Delta_+^{0,0}=I$. 

For any $f\in V^2_{\varphi\psi}$ (such that $\mathcal{T}_{\varphi\psi}F=f$) or $f\in V^2_\Phi$ (such that $\mathcal{T}_\Phi F=f$), we respectively get
\begin{equation}
\label{d+1}
\Delta_+^{k,k'} f(t,s)=\big\langle F, (e^{-2\pi ix}-1)^k\,(e^{-2\pi iy}-1)^{k'}\,K_t(x)\,\widetilde{K}_s(y) \big\rangle_{L^2(0,1)^2}\,, 
\end{equation}
or
\begin{equation}
\label{d+2}
\Delta_+^{k,k'} f(t,s)=\big\langle F, (e^{-2\pi ix}-1)^k\,(e^{-2\pi iy}-1)^{k'}\,K_{t,s}(x,y)\big\rangle_{L^2(0,1)^2}\,.
\end{equation}

Next we exhibit a couple of examples involving forward differences up to second order. The obtained results could be generalized to high order forward differences and even to higher dimensions; besides, backward differences or averages could be used as in the previous section.
\subsubsection*{Using the samples $\Delta_+^{k,k'}f(a+2n,b+3m)$, ($k=0,1\,;\, k'=0,1,2$)  in $V^2_{\varphi\psi}$}
For any $f=\mathcal{T}_{\varphi\psi}F$ in $V^2_{\varphi\psi}$ we have the expression for the samples (see \eqref{d+1}):
\[
\Delta_+^{k,k'} f(a+2n,b+3m)=\Big\langle F, (e^{-2\pi ix}-1)^k(e^{-2\pi iy}-1)^{k'}e^{-2\pi i (2n)x}K_a(x)e^{-2\pi i (3m)y}\widetilde{K}_b(y) \Big\rangle\,,
\]
where $n,m\in \mathbb{Z}$ and $k=0,1\,;\, k'=0,1,2$. 

Consider the Riesz basis $\big\{x_n\otimes \widetilde{x}_m\big\}_{n,m\in \mathbb{Z}}$ for $L^2(0,1)^2$, where $x_n:=e^{-2\pi i n x} K_a(x)$ and $\widetilde{x}_m:=e^{-2\pi i m y} \widetilde{K}_b(y)$, $n,m\in \mathbb{Z}$.  According to Lemma \ref{partition}, for the partition 
\[
\begin{split}
&\{x_{2n}\otimes  \widetilde{x}_{3m}\}_{n,m\in \mathbb{Z}}\cup \{x_{2n}\otimes  \widetilde{x}_{3m+1}\}_{n,m\in \mathbb{Z}}\cup \{x_{2n}\otimes  \widetilde{x}_{3m+2}\}_{n,m\in \mathbb{Z}}\\
\cup\,\,&\{x_{2n+1}\otimes  \widetilde{x}_{3m}\}_{n,m\in \mathbb{Z}}
\cup \{x_{2n+1}\otimes  \widetilde{x}_{3m+1}\}_{n,m\in \mathbb{Z}}\cup \{x_{2n+1}\otimes  \widetilde{x}_{3m+2}\}_{n,m\in \mathbb{Z}}\,, 
\end{split}
\]
the  matrix of the Riesz basis associated with the above samples will be the {\em Kronecker  product} (tensor product) of the corresponding matrices derived, for the one-dimensional case, in Section \ref{section3}. That is:
\[
\begin{pmatrix} 1&0 \\ -1& 1\\ \end{pmatrix} \otimes \begin{pmatrix}
1&0&0\\
-1&1&0\\
1&-2&1\\
\end{pmatrix}=
\begin{pmatrix}
1&0&0&0&0&0\\
-1&1&0&0&0&0\\
1&-2&1&0&0&0\\
-1&0&0&1&0&0\\
1&-1&0&-1&1&0\\
-1&2&-1&1&-2&1\\
\end{pmatrix}\,.
\]
Its inverse is the Kronecker product of their respective inverses (see, for instance, Ref. \cite{laub:05}); namely:
\begin{equation}
\label{i1}
\begin{pmatrix} 1&0 \\ 1& 1\\ \end{pmatrix} \otimes \begin{pmatrix}
1&0&0\\
1&1&0\\
1&2&1\\
\end{pmatrix}=
\begin{pmatrix}
1&0&0&0&0&0\\
1&1&0&0&0&0\\
1&2&1&0&0&0\\
1&0&0&1&0&0\\
1&1&0&1&1&0\\
1&2&1&1&2&1\\
\end{pmatrix}\,.
\end{equation}
Thus, the corresponding sampling formula for each $f\in V^2_{\varphi\psi}$ reads:

\begin{cgris*}{}
{
\[
f(t,s)=\sum_{n,m\in \mathbb{Z}}\Big\{\sum_{k=0}^1\sum_{k'=0}^2 \,\Delta_+^{k,k'}f(a+2n,b+3m)\,\widetilde{T}^{k,k'}_{a,b}(t-2n,s-3m)\Big\}\,, \quad t,s\in \mathbb{R}\,,
\]}
\end{cgris*}
where the sampling functions $\widetilde{T}^{k,k'}_{a,b}(t,s)$ are obtained from the columns of the inverse matrix \eqref{i1} as
\[
\begin{split}
\widetilde{T}^{0,0}_{a,b}(t,s)&=S_a(t)\,\widetilde{S}_b(s)+S_a(t)\,\widetilde{S}_b(s-1)+S_a(t)\,\widetilde{S}_b(s-2)+S_a(t-1)\,\widetilde{S}_b(s)\\
&+S_a(t-1)\,\widetilde{S}_b(s-1)+S_a(t-1)\,\widetilde{S}_b(s-2)\,;\\
\widetilde{T}^{0,1}_{a,b}(t,s)&=S_a(t)\,\widetilde{S}_b(s-1)+2S_a(t)\,\widetilde{S}_b(s-2)+S_a(t-1)\,\widetilde{S}_b(s-1)+2S_a(t-1)\,\widetilde{S}_b(s-2)\,;\\
\widetilde{T}^{0,2}_{a,b}(t,s)&=S_a(t)\,\widetilde{S}_b(s-2)+S_a(t-1)\,\widetilde{S}_b(s-2)\,;\\
\widetilde{T}^{1,0}_{a,b}(t,s)&=S_a(t-1)\,\widetilde{S}_b(s)+S_a(t-1)\,\widetilde{S}_b(s-1)+S_a(t-1)\,\widetilde{S}_b(s-2)\,; \\
\widetilde{T}^{1,1}_{a,b}(t,s)&=S_a(t-1)\,\widetilde{S}_b(s-1)+2S_a(t-1)\,\widetilde{S}_b(s-2)\,; \\
\widetilde{T}^{1,2}_{a,b}(t,s)&=S_a(t-1)\,\widetilde{S}_b(s-2)\,, \quad t,s \in \mathbb{R}\,.
\end{split}
\]

Observe that $\mathcal{T}_{\varphi\psi}\big(y_{2n+k}\otimes \widetilde{y}_{3m+k'}\big)=S_a(t-(2n+k))\widetilde{S}_b(s-(3m+k'))$, where $y_n:=e^{-2\pi i n x}/\overline{K_a(x)}$ and $\widetilde{y}_m:=e^{-2\pi i m y}/\overline{\widetilde{K}_b(y)}$, $n,m\in \mathbb{Z}$.
\subsubsection*{Using the samples $\Delta_+^{k,k'}f(a+3n,b+3m)$, ($k,k'=0,1,2$)  in $V^2_\Phi$}
For any $f=\mathcal{T}_\Phi F$ in $V^2_{\Phi}$ we have the expression for the samples (see \eqref{d+2}):
\[
\Delta_+^{k,k'} f(a+3n,b+3m)=\Big\langle F, (e^{-2\pi ix}-1)^k(e^{-2\pi iy}-1)^{k'}e^{-2\pi i (3n)x}\,e^{-2\pi i (3m)y}K_{a,b}(x,y) \Big\rangle\,,
\]
where $n,m\in \mathbb{Z}$ and $k,k'=0,1,2$. Consider the Riesz basis $\big\{x_{n,m}\big\}_{n,m\in \mathbb{Z}}$ for $L^2(0,1)^2$, where $x_{n,m}:={\rm e}^{-2\pi i n x}\,{\rm e}^{-2\pi i my}K_{a,b}(x,y)$, $n,m\in \mathbb{Z}$. According to Lemma \ref{partition}, for the partition 
\[
\begin{split}
&\{x_{3n,3m}\}_{n,m\in \mathbb{Z}}\cup \{x_{3n,3m+1}\}_{n,m\in \mathbb{Z}}\cup\{x_{3n,3m+2}\}_{n,m\in \mathbb{Z}}\\
\cup\,\,&\{x_{3n+1,3m}\}_{n,m\in \mathbb{Z}}\cup \{x_{3n+1,3m+1}\}_{n,m\in \mathbb{Z}}\cup \{x_{3n+1,3m+2}\}_{n,m\in \mathbb{Z}}\\
\cup\,\,&\{x_{3n+2,3m}\}_{n,m\in \mathbb{Z}}\cup \{x_{3n+2,3m+1}\}_{n,m\in \mathbb{Z}}\cup \{x_{3n+2,3m+2}\}_{n,m\in \mathbb{Z}},
\end{split}
\]
the Riesz basis associated to the above samples has as matrix the Kronecker product: 
\[
\begin{pmatrix}
1&0&0\\
-1&1&0\\
1&-2&1\\
\end{pmatrix} \otimes \begin{pmatrix}
1&0&0\\
-1&1&0\\
1&-2&1\\
\end{pmatrix}=\begin{pmatrix}
1&0&0&0&0&0&0&0&0\\
-1&1&0&0&0&0&0&0&0\\
1&-2&1&0&0&0&0&0&0\\
-1&0&0&1&0&0&0&0&0\\
1&-1&0&-1&1&0&0&0&0\\
-1&2&-1&1&-2&1&0&0&0\\
1&0&0&-2&0&0&1&0&0\\
-1&1&0&2&-2&0&-1&1&0\\
1&-2&1&-2&4&-2&1&-2&1\\
\end{pmatrix}\,,
\]
whose inverse is:
\begin{equation}
\label{i2}
\begin{pmatrix}
1&0&0\\
1&1&0\\
1&2&1\\
\end{pmatrix} \otimes \begin{pmatrix}
1&0&0\\
1&1&0\\
1&2&1\\
\end{pmatrix}=\begin{pmatrix}
1&0&0&0&0&0&0&0&0\\
1&1&0&0&0&0&0&0&0\\
1&2&1&0&0&0&0&0&0\\
1&0&0&1&0&0&0&0&0\\
1&1&0&1&1&0&0&0&0\\
1&2&1&1&2&1&0&0&0\\
1&0&0&2&0&0&1&0&0\\
1&1&0&2&2&0&1&1&0\\
1&2&1&2&4&2&1&2&1\\
\end{pmatrix}\,.
\end{equation}
Thus, the corresponding sampling formula for each $f\in V^2_\Phi$ reads:

\begin{cgris*}{}
{
\[
f(t,s)=\sum_{n,m\in \mathbb{Z}}\Big\{\sum_{k,k'=0}^2\,\Delta_+^{k,k'}f(a+3n,b+3m)\,T^{k,k'}_{a,b}(t-3n,s-3m)\Big\}\,, \quad t,s\in \mathbb{R}\,,
\]}
\end{cgris*}
where the sampling functions $T^{k,k'}_{a,b}(t,s)$ are obtained from the columns of the inverse matrix \eqref{i2} as:
\[
\begin{split}
T^{0,0}_{a,b}(t,s)&=S_{a,b}(t,s)+S_{a,b}(t,s-1)+S_{a,b}(t,s-2)+S_{a,b}(t-1,s)+S_{a,b}(t-1,s-1)
\\&+S_{a,b}(t-1,s-2)+S_{a,b}(t-2,s)+S_{a,b}(t-2,s-1)+S_{a,b}(t-2,s-2)\,; \\
T^{0,1}_{a,b}(t,s)&= S_{a,b}(t,s-1)+2S_{a,b}(t,s-2)+S_{a,b}(t-1,s-1)+2S_{a,b}(t-1,s-2)\\
&+S_{a,b}(t-2,s-1)+2S_{a,b}(t-2,s-2)\,;\\
T^{0,2}_{a,b}(t,s)&= S_{a,b}(t,s-2)+S_{a,b}(t-1,s-2)+S_{a,b}(t-2,s-2)\,;\\
T^{1,0}_{a,b}(t,s)&= S_{a,b}(t-1,s)+S_{a,b}(t-1,s-1)+S_{a,b}(t-1,s-2)+2S_{a,b}(t-2,s-2)\\
&+2S_{a,b}(t-2,s-1)+2S_{a,b}(t-2,s-2)\,;\\
T^{1,1}_{a,b}(t,s)&=S_{a,b}(t-1,s-1)+2S_{a,b}(t-1,s-2)+2S_{a,b}(t-2,s-1)+4S_{a,b}(t-2,s-2)\,; \\
T^{1,2}_{a,b}(t,s)&=S_{a,b}(t-1,s-2)+2S_{a,b}(t-2,s-2)\,;\\
T^{2,0}_{a,b}(t,s)&=S_{a,b}(t-2,s)+S_{a,b}(t-2,s-1)+S_{a,b}(t-2,s-2)\,; \\
T^{2,1}_{a,b}(t,s)&=S_{a,b}(t-2,s-1)+2S_{a,b}(t-2,s-2)\,;\\
T^{2,2}_{a,b}(t,s)&=S_{a,b}(t-2,s-2)\,, \quad t,s \in \mathbb{R}\,.
\end{split}
\]

Observe that $\mathcal{T}_{\Phi}\big(y_{3n+k,3m+k'}\big)=S_{a,b}(t-(3n+k),s-(3m+k'))$, where $y_{n,m}:={\rm e}^{-2\pi i n x}\,{\rm e}^{-2\pi i my}/\overline{K_{a,b}(x,y)}$, $n,m\in \mathbb{Z}$.

\section{Conclusions}
Starting from a Shannon-type sampling formula valid in a shift-invariant subspace, we have introduced a systematic way to derive stable sampling formulas involving differences and/or averages from the data available in the former sampling formula. The mathematical technique is very simple and relies on a representation of the elements of the shift-invariant subspace as an inner product in the auxiliary space $L^2(0,1)$; this allows us to identify an involved dual pair of Riesz bases by means of  an easy matrices calculation. Besides, a suitable isomorphism between $L^2(0,1)$ and the shift-invariant subspace gives the resulting sampling formulas. These formulas can be viewed as a multichannel filter processing.

Thus, a non-exhaustive variety of examples in the one-dimensional case are exhibited throughout the work. All these sampling formulas can be understood as a change of basis in the original Shannon-type sampling formula. The two-dimensional case is also treated: it reduces to the one-dimensional case via the Kronecker product of matrices. The same procedure could be generalized to higher dimensions.

Although the obtained formulas are Riesz bases expansions, as it was pointed out the used technique still remains valid for overcomplete expansions i.e., the frame setting. Furthermore, this technique also works in other types of sampling formulas:  for instance, those involving samples of filtered versions of the functions in the shift-invariant subspace.

\medskip

\noindent{\bf Acknowledgments:} 
This work has been supported by the grant MTM2014-54692-P from the Spanish {\em Ministerio de Econom\'{\i}a y Competitividad (MINECO)}
\vspace*{0.3cm}


\begin{thebibliography}{10}


\bibitem{aldroubi:01}
A.~Aldroubi and K.~Gr{\"o}chenig.
\newblock Non-uniform sampling and reconstruction in shift-invariant spaces.
\newblock {\em SIAM Rev.}, 43:585--620, 2001.

\bibitem{aldroubi:94}
A.~Aldroubi and M.~Unser.
\newblock Sampling procedures in function spaces and asymptotic equivalence 
     with Shannon sampling theorem.
\newblock {\em Numer. Funct. Anal. Optimiz.}, 15:1--21, 1994.

\bibitem{aldroubi:05}
A.~Aldroubi, Q.~Sun, and W-S.~Tang.
\newblock Convolution, average sampling, and a Calderon resolution of the 
identity for shift-invariant spaces.
\newblock {\em J. Fourier Anal. Appl.}, 11(2):215--244, 2005.

\bibitem{aldroubi:92}
A.~Aldroubi, M.~Unser and M.~Eden.
\newblock Cardinal spline filters: Stability and convergence to the ideal sinc interpolator.
\newblock {\em Signal Process.}, 28:127--138, 1992.

\bibitem{boor1:94}
C.~Boor, R.~DeVore, and A.~Ron.
\newblock Approximation from shift-invariant subspaces in
  ${L}^2(\mathbb{R}^d)$.
\newblock {\em Trans. Amer. Math. Soc.}, 341:787--806, 1994.

\bibitem{boor2:94}
C.~Boor, R.~DeVore, and A.~Ron.
\newblock The structure of finitely generated shift-invariant spaces in
  ${L}^2(\mathbb{R}^d)$.
\newblock {\em J. Funct. Anal.}, 119:37--78, 1994.

\bibitem{chen:02}
W.~Chen, S.~Itoh, and J.~Shiki.
\newblock On sampling in shift invariant spaces.
\newblock {\em IEEE Trans. Signal Processing}, 48(10):2802--2810, 2002.

\bibitem{ole:03}
O.~Christensen.
\newblock {\em An Introduction to Frames and {R}iesz Bases}.
\newblock Birkh{\"a}user, Boston, 2003.

\bibitem{garcia:05}
A.~G. Garc\'{\i}a, G.~P\'erez-Villal\'on, and A.~Portal.
\newblock Riesz bases in ${L}^2(0,1)$ related to sampling in shift-invariant
  spaces.
\newblock {\em J. Math. Anal. Appl.}, 308(2):703--713, 2005.

\bibitem{garcia:06}
A.~G. Garc\'{\i}a and G.~P\'erez-Villal\'on.
\newblock Dual frames in ${L}^2(0,1)$ connected with generalized sampling in
shift-invariant spaces.
\newblock {\em Appl. Comput. Harmon. Anal.}, 20(3):422--433, 2006.

\bibitem{garcia:09}
A.~G. Garc\'{\i}a and G.~P\'erez-Villal\'on.
\newblock Multivariate generalized sampling in shift-invariant spaces and its approximation properties.
\newblock {\em J. Math. Anal. Appl.}, 355:397--413, 2009.

\bibitem{garcia:14}
A.~G. Garc\'{\i}a, M.~A. Hern\'andez-Medina and M.~J. Mu\~noz-Bouzo.
\newblock The Kramer sampling theorem revisited.
\newblock {\em Acta Appl. Math.}, 133:87--111, 2014.

\bibitem{garcia:15}
A.~G. Garc\'{\i}a.
\newblock Sampling theory and reproducing kernel Hilbert spaces.
\newblock In: {\em Operator Theory}, D. Alpay Editor, Springer, Basel, 2015.

\bibitem{garcia:16}
A.~G. Garc\'{\i}a, A.~Ibort and M.~J. Mu\~noz-Bouzo.
\newblock Modeling sampling in tensor products of unitary invariant subspaces.
\newblock {\em J. Funct. Spaces}, Vol. 2016, Article ID 4573940, 14 pp., 2016.

\bibitem{kang:11}
S.~Kang and K.~H.~Kwon.
\newblock Generalized average sampling in shift invariant spaces.
\newblock{\em J. Math. Anal. Appl.}, 377:70--78, 2011.

\bibitem{kwon:08}
J.~M.~Kim and K.~H.~Kwon.
\newblock Sampling expansion in shift invariant spaces.
\newblock{\em Int. J. Wavelets Multiresolut. Inf. Process.}, 6:223--248, 2008.

\bibitem{laub:05}
A.~J.~Laub.
\newblock {\em Matrix Analysis}.
\newblock Siam, Philadelfia, 2005.

\bibitem{lei:97}
J.~J. Lei, R.~Q. Jia, and E.~W. Cheney.
\newblock Approximation from shift-invariant spaces by integral operators.
\newblock {\em SIAM J. Math. Anal.}, 28(2):481--498, 1997.

\bibitem{schoenberg:93}
I.~J.~Schoenberg.
\newblock {\em Cardinal Spline Interpolation}.
\newblock Siam, Philadelfia, 1993.

\bibitem{sun:02}
 W.~Sun and X.~Zhou.
\newblock Average sampling in spline subspaces.
\newblock {\em Appl. Math. Lett.}, 15:223--237, 2002.

\bibitem{unser:00}
M.~Unser.
\newblock Sampling 50 Years After {S}hannon.
\newblock {\em Proc. IEEE}, 88(4):569--587, 2000.

\bibitem{walter:92}
G.~G. Walter.
\newblock A sampling theorem for wavelet subspaces.
\newblock {\em IEEE Trans. Inform. Theory}, 38:881--884, 1992.

\bibitem{sun:99}
X.~Zhou and W.~Sun.
\newblock On the {S}ampling {T}heorem for {W}avelet {S}ubspaces.
\newblock {\em J. Fourier Anal. Appl.}, 5(4):347--354, 1999.


\end{thebibliography}
\end{document}